\numberwithin{equation}{section}
\newtheorem{theorem}{\textbf{Theorem}}[section]
\newtheorem{theorem*}{\textbf{Theorem}}
\newtheorem{proposition}[theorem]{\textbf{Proposition}}
\newtheorem{claim}[theorem*]{\textbf{Claim}}
\newtheorem{corollary}[theorem]{\textbf{Corollary}}
\newtheorem{remark}[theorem]{\textbf{Remark}}
\newtheorem{definition/proposition}[theorem]{\textbf{Definition/Proposition}}
\providecommand{\customgenericname}{}
\newcommand{\newcustomtheorem}[2]{%
	\newenvironment{#1}[1]
	{%
		\renewcommand\customgenericname{#2}%
		\renewcommand\theinnercustomgeneric{##1}%
		\innercustomgeneric
	}
	{\endinnercustomgeneric}
}
\def\N{{\mathbb N}}
\def\R{\mathbb{R}}
\def\Z{{\mathbb Z}}
\def\C{{\mathbb C}}
\def\Q{{\mathbb Q}}
\def\std{\rm{std}}
\DeclareMathOperator{\rank}{rank}
\newcommand{\Addresses}{{
		\bigskip
		\footnotesize
		
	     Zhengyi Zhou, \par\nopagebreak
	    \textsc{Morningside Center of Mathematics and Institute of Mathematics, AMSS, CAS, China}\par\nopagebreak
		\textit{E-mail address}: \href{mailto:zhyzhou@amss.ac.cn}{zhyzhou@amss.ac.cn}

}}
\title{Infinite not contact isotopic embeddings in $(S^{2n-1},\xi_{\std})$ for $n\ge 4$}
\author{Zhengyi Zhou}
\begin{document}
	\maketitle
\begin{abstract}
For $n\ge 4$, we show that there are infinitely many formally contact isotopic embeddings of $(ST^*S^{n-1},\xi_{\std})$ to  $(S^{2n-1},\xi_{\std})$ that are not contact isotopic. This resolves a conjecture of Casals and Etnyre \cite{Nonsimple} except for the $n=3$ case. The argument does not appeal to the surgery formula of critical handle attachment for Floer theory/SFT.

\end{abstract}
\section{Introduction}

Isocontact embeddings in dimension $3$, a.k.a.\ transverse links, are a fundamental object in the study of contact $3$-folds \cite{contact}. Their higher dimensional analogues are less studied until recently. A recent breakthrough in this direction is due to Casals and Etnyre \cite{Nonsimple}, who proved that isocontact embeddings of codimension $2$ submanifolds is not simple, i.e.\ being contact isotopic is finer than the underlying topological information (being formally contact isotopic). Such rigidity result emphasizes the fundamental difference between isocontact embeddings of codimension $2$, where non-topological obstructions appear, and isocontact embeddings of codimension at least $4$, which are governed by $h$-principle \cite{h-principle}, hence completely determined by their formal topological data. More precisely, Casals and Etnyre \cite[Theorem 1.1]{Nonsimple} showed that there are two contact embeddings of $(ST^*S^{n-1},\xi_{\std})$, i.e.\ the unit sphere bundle of the cotangent bundle of $S^{n-1}$ equipped with the canonical Liouville structure,  into  $(S^{2n-1},\xi_{\std})$ for $n\ge 3$ that are formally contact isotopic but not contact isotopic.  Then they conjectured \cite[Conjecture 1.5]{Nonsimple} that there are infinitely many formally contact isotopic embeddings of the standard $ST^*S^{n-1}$ into $(S^{2n-1},\xi_{\std})$ that are not contact isotopic to each other. In this note, we give a proof of their conjecture for $n\ge 4$.

\begin{theorem}\label{thm:main}
For each $n\ge 4$, there exists infinitely many contact embeddings of the standard $ST^*S^{n-1}$ into $(S^{2n-1},\xi_{\std})$ that are formally isotopic but not contact isotopic.
\end{theorem}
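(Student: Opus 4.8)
The plan is to realize the standard embedding $\iota_0$ of $ST^*S^{n-1}$ as the binding of the Milnor open book of the $A_1$-singularity $z_1^2+\cdots+z_n^2$ on $(S^{2n-1},\xi_{\std})$, whose page is the Weinstein domain $DT^*S^{n-1}$ and whose monodromy is the right-handed Dehn twist $\tau$ along the zero section. From $\iota_0$ I would build an infinite family $\{\iota_k\}_{k\ge 1}$ by re-gluing a contact tubular neighborhood of the binding via $k$ fiberwise twists, a modification that is supported near the image, is smoothly isotopic to the identity, and keeps $(S^{2n-1},\xi_{\std})$ fixed throughout. The reason this bypasses the surgery formula is that the operation is purely a modification of the embedding into a fixed closed contact manifold, rather than a handle attachment whose effect on Floer-theoretic invariants must be computed by a gluing formula.

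Second, I would check that all $\iota_k$ are formally contact isotopic to $\iota_0$. The formal data of a codimension-$2$ isocontact embedding consists of the smooth isotopy class together with a homotopy class of conformal symplectic trivialization of the normal bundle and a compatible lift of the associated Gauss-type map. The $k$-fold fiberwise twist is smoothly isotopic to the identity and acts on this homotopy-theoretic data through a loop in the fiberwise linear symplectic group that becomes null-homotopic after stabilization for $n\ge 4$; this is where the dimension hypothesis enters on the formal side and guarantees that the normal framings of $\iota_k$ and $\iota_0$ agree.

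Third, and this is the heart of the argument, I would distinguish the $\iota_k$ up to contact isotopy by an algebraic invariant of the complement $W_k := S^{2n-1}\setminus \nu(\iota_k(ST^*S^{n-1}))$, which is a Liouville domain carrying the structure of an abstract open book, i.e.\ a mapping torus of $\tau^{k}$ acting on $DT^*S^{n-1}$. Since a contact isotopy of embeddings induces an exact symplectomorphism of complements, any symplectic invariant of $W_k$ is an invariant of $\iota_k$. I would use positive ($S^1$-equivariant) symplectic cohomology, computable here without any surgery or handle-attachment formula: the Reeb dynamics on $\partial W_k$ are Morse--Bott and governed by the fixed-point data of $\tau^{k}$ on the page, so the generators and gradings can be read off from Seidel's description of powers of the Dehn twist together with the loop-space homology of $S^{n-1}$ in the sense of Viterbo. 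The upshot is a numerical invariant, for instance the top nonvanishing degree or the rank in a fixed action window, that grows with $k$ and so yields infinitely many pairwise non-isotopic embeddings.

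The main obstacle I anticipate is precisely this last computation: proving that the chosen invariant is genuinely monotone, or at least injective, in $k$. The difficulty is controlling the Floer differential on the Morse--Bott complex, namely ruling out cancellation between the new families of generators created by the $k$-fold twist. The cleanest route is to exploit an action-and-degree filtration so that the lowest-action new generators survive in homology for degree reasons, again using $n\ge 4$ to guarantee enough room in the grading and to ensure the relevant moduli spaces in the model are cut out transversely. A secondary, more technical point is verifying that the local twisting construction indeed produces a contact embedding back into the \emph{standard} sphere rather than some filling-dependent deformation of it, which I would settle by an explicit contact Hamiltonian normal-form computation in the tubular neighborhood of the binding.
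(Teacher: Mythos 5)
Your proposal has a fatal internal tension at its first step, before any Floer theory is needed. Re-gluing a tubular neighborhood of the binding $ST^*S^{n-1}$ by $k$ fiberwise twists is a modification of the \emph{ambient} contact manifold, not of the embedding: the open book with page $DT^*S^{n-1}$ and monodromy $\tau$ supports $(S^{2n-1},\xi_{\std})$, but twisting the gluing near the binding changes the supported contact structure (the analogous open books with monodromy $\tau^{k}$ yield Brieskorn-type contact manifolds, which are in general not the standard sphere; in dimension $3$ such re-gluings are Lutz twists, which destroy standardness). So either your construction produces embeddings into non-standard contact spheres --- and the point you deferred as ``secondary, more technical'' is actually where the whole argument lives --- or, if you insist the ambient structure and the image submanifold are genuinely unchanged, your $\iota_k$ differ from $\iota_0$ only by the identification of the normal bundle, the complements are literally the same subset of $S^{2n-1}$, and no invariant of the complement can possibly distinguish them. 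A second, independent error: the complement of a codimension-$2$ contact submanifold is not a Liouville domain; it is a contact manifold with boundary (here, the mapping torus of the monodromy), so $SH^*_+$ of $W_k$ is not defined as you use it, and a contact isotopy of embeddings induces a contactomorphism of complements, not an exact symplectomorphism. Extracting Floer-type invariants from such complements requires either sutured/relative theories or a geometric device converting the complement data into a closed contact manifold or a filling.

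That device is exactly what the paper borrows from Casals--Etnyre: the branched double cover of $(S^{2n-1},\xi_{\std})$ along the contact push-off of a Legendrian sphere $\Lambda$ is the contact manifold obtained by critical handle attachment along $\Lambda\#\Lambda$, and contact isotopic embeddings give contactomorphic covers. The paper then feeds in the Legendrians $\Lambda_k=\partial\hat L_k$ bounding flexible Lagrangians with $H^2(L_k)=\Z^{2k}$ (Eliashberg--Ganatra--Lazarev), shows $\Lambda_k\#\Lambda_k$ bounds the flexible Lagrangian $\widehat{L_k\# L_k}$ via an ambient-surgery cobordism, verifies the resulting boundaries $Y_k$ are asymptotically dynamically convex (this is where $n\ge 4$ enters, via simply connected unit cotangent bundles and subcritical/flexible handles of index $\le n-1$), and distinguishes them by $SH^{n-1}_+(W_k;\Q)\cong H_1(\Lambda(L_k\#L_k),L_k\#L_k;\Q)=\Q^{4k}$, computed with the Viterbo isomorphism and Sullivan minimal models --- legitimate because $SH^*_+$ is a contact invariant of ADC boundaries with Weinstein fillings. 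Note the paper also avoids the surgery formula, but through flexibility plus ADC invariance, not by avoiding handle attachment; your hope of working entirely inside the fixed sphere cannot work for the structural reasons above.
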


Let us briefly recall the geometric constructions in \cite{Nonsimple}. Starting from a Legendrian sphere $\Lambda$ in $(S^{2n-1},\xi_{\std})$, using the Weinstein neighborhood theorem, we get a contact push-off, i.e.\ a contact embedding of the standard $ST^*S^{n-1}$ into $(S^{2n-1},\xi_{\std})$. When $\Lambda,\Lambda'$ are formally Legendrian isotopic, the contact embeddings are also formally isotopic. Then Casals and Etnyre considered the branched double cover of $S^{2n-1}$ with branching locus $ST^*\Lambda$, where contact isotopic embeddings will induce contactormophic branched covers. The key fact in \cite{Nonsimple} is that the branched cover is precisely the contact manifold obtained by attaching a critical handle along the Legendrian sum $\Lambda \# \Lambda$. Therefore the problem of finding formally isotopotic but not contact isotopic emebeddings is reduced to finding formally isotopic Legendrians with different contact boundaries after the surgery along $\Lambda\#\Lambda$. Finally, Casals and Etnyre found a pair of such Legendrians $\Lambda,\Lambda'$ with $\Lambda'$ loose, such that the resulting contact manifolds are $ST^*S^n$ and $\partial (\mathrm{Flex}(T^*S^n))$, which are different by \cite{vanishing}.

\begin{remark}
There is another method to distinguish contact submanifolds via studying contact homology coupled with the intersection data with the holomorphic hypersurface given by the symplectization of the contact submanifold. Such invariants were introduced in \cite{cote2020homological} by C\^ot\'e and Fauteux-Chapleau, who used them to provide an alternative proof that some of different contact submanifolds built in \cite{Nonsimple} via contact push-off are indeed not contact isotopic, reproving \cite[Theorem 1.1]{Nonsimple} for $(S^{4n-1},\xi_{\std})$ and $n>1$.
\end{remark}

With the geometric constructions above, to prove \cite[Conjecture 1.5]{Nonsimple}, Casals and Etnyre  suggested to find infinitely many distinct but formally isotopic Legendrian spheres, then appeal to the surgery formula \cite{surgery} to show that they result in different contact boundaries after the surgery along $\Lambda \# \Lambda$. This is certainly plausible given the richness of formally isotopic Legendrian spheres on $(S^{2n-1},\xi_{\std})$. However, it is still a nontrivial task to compute their holomorphic curve invariants even using \cite{surgery}. Moreover, computing augmented invariants like symplectic cohomology as in \cite{surgery} is not sufficient to tell the differences of the contact boundaries. In fact, the latter is the main reason why we have to restrict to the case of $n\ge 4$ in this note. More precisely, we solve the $n\ge 4$ case by applying their geometric construction to the Legendrian spheres arising from the construction of exotic $T^*S^n$ by Eliashberg, Ganatra and Lazarev \cite{flexible}. More precisely, those exotic $T^*S^n$ are constructed from attaching a critical handle to formally isotopic Legendrian spheres that are not Legendrian isotopic. Those Legendrian spheres are boundaries of flexible Lagrangians of different diffeomorphism types in $\C^n$, whose existence is confirmed by $h$-principles in \cite{flexible}. The resulting contact manifolds are distinguished by their positive symplectic cohomology, which is a contact invariant for those asymptotically dynamically convex manifolds admitting Weinstein fillings \cite{ADC}. We note here that our proof does not rely on the surgery formula for critical handle attachments.

In the $n=3$ case, even though we do not find infinitely many not contact isotopic embeddings of $ST^*S^2$, we do have at least one more that is different from the two examples from \cite{Nonsimple}.

\begin{proposition}\label{prop:three}
There are three formally contact isotopic embeddings of $ST^*S^2$ into $(S^5,\xi_{\std})$ that are pairwisely not contact isotopic.
\end{proposition}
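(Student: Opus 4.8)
The plan is to realize the three embeddings as contact push-offs of three Legendrian $2$-spheres in $(S^5,\xi_\std)$ and to separate them through their associated branched double covers. By the construction recalled above, a contact isotopy between the push-offs of two Legendrian spheres $\Lambda,\Lambda'$ induces a contactomorphism between the contact $5$-manifolds $Y,Y'$ obtained by attaching a critical handle along $\Lambda\#\Lambda$ and $\Lambda'\#\Lambda'$. It therefore suffices to produce three Legendrian spheres $\Lambda_0,\Lambda_1,\Lambda_2$ whose contact push-offs are formally contact isotopic but whose branched covers $Y_0,Y_1,Y_2$ are pairwise non-contactomorphic. I would detect the last condition with the positive symplectic cohomology $SH^*_+$, which by \cite{ADC} is a contact invariant of each $Y_i$ once $Y_i$ is known to be asymptotically dynamically convex and Weinstein fillable.

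For $\Lambda_0$ I would take the standard Legendrian unknotted sphere, whose branched cover is the standard $ST^*S^3$; here $SH^*_+$ is computed by the reduced loop space homology of $S^3$ and is in particular nonzero in infinitely many degrees. For $\Lambda_1$ and $\Lambda_2$ I would take \emph{loose} Legendrian spheres, chosen so that the handle attachments yield flexibly fillable $Y_1,Y_2$; here $\Lambda_0,\Lambda_1$ recover the two examples of \cite{Nonsimple} (so $Y_1=\partial\,\mathrm{Flex}(T^*S^3)$), while $\Lambda_2$ is the genuinely new one. For a flexible filling $W$ one has $SH^*(W)=0$, so the long exact sequence relating $SH^*$, $SH^*_+$ and ordinary cohomology degenerates to $SH^*_+(Y_i)\cong H^{*+1}(W_i)$; since $W_i$ is $B^6$ with a single critical $3$-handle it is homotopy equivalent to $S^3$, and $SH^*_+(Y_i)$ is finite dimensional. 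This immediately separates $Y_0$ from $Y_1$ and $Y_2$. To separate $Y_1$ from $Y_2$ I would use that, although the underlying groups of $SH^*_+(Y_1)$ and $SH^*_+(Y_2)$ both agree with $H^{*+1}(S^3)$, their \emph{absolute grading} is shifted by an integer read off from the rotation (Maslov-type) class of the attaching Legendrian $\Lambda_i\#\Lambda_i$; choosing $\Lambda_1,\Lambda_2$ loose but of distinct rotation classes places the two surviving generators in different degrees, whence $Y_1\not\cong Y_2$.

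The delicate point — and the reason the list stops at three — is that the assignment sending a Legendrian sphere to the formal isotopy class of its contact push-off forgets part of the rotation data: distinct rotation classes of $\Lambda$ can yield formally isotopic push-offs while forcing different gradings on $SH^*_+(Y)$. I would therefore first determine the set of rotation classes of Legendrian $S^2$'s in $(S^5,\xi_\std)$ lying in a single such formal push-off class; this set is governed by a homotopy group of the Lagrangian Grassmannian that is finite in the present dimension, in sharp contrast with the infinite supply exploited for Theorem~\ref{thm:main} when $n\ge4$. The content of the proposition is that this finite set nonetheless produces two distinct flexible gradings, which together with the standard $ST^*S^3$ gives the three asserted embeddings. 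Having fixed the target classes, I would invoke the $h$-principle for loose Legendrians to realize each of them by an honest loose Legendrian inside the prescribed formal push-off class, which simultaneously guarantees that all three embeddings are formally contact isotopic.

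The main obstacle I expect is the grading bookkeeping together with the verification of asymptotic dynamical convexity. Concretely, one must confirm that each flexibly fillable $Y_i$ is asymptotically dynamically convex, so that $SH^*_+$ is a contact rather than merely a filling invariant and non-contactomorphism of the $Y_i$ genuinely implies that the push-offs are not contact isotopic; and one must compute the absolute degree of the surviving generators of $SH^*_+(Y_i)$ as an explicit function of the rotation class of $\Lambda_i\#\Lambda_i$. It is exactly here that dimension three is restrictive: the absence of exotic $3$-spheres and the finiteness of the relevant rotation group cap the number of realizable gradings, so that, unlike the higher-dimensional case, one can extract only finitely many — but at least three — pairwise distinct contact embeddings.
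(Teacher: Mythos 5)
There is a genuine gap in your mechanism for producing the third embedding. Both of your $\Lambda_1$ and $\Lambda_2$ are loose, so (looseness surviving the Legendrian sum) $Y_1$ and $Y_2$ bound \emph{flexible} Weinstein domains $W_1,W_2$, each homotopy equivalent to $S^3$. Since $H^1(W_i)=0$ and $c_1(W_i)=0$, the trivialization of the canonical bundle is unique up to homotopy, so the $\Z$-grading on $SH^*_+(Y_i)$ is canonical: there is no residual grading ambiguity for a rotation class to shift. A rotation class changes Conley--Zehnder indices of individual generators, but once $SH^*(W_i)=0$ the tautological exact sequence pins $SH^*_+(W_i)\cong H^{*+n+1}(W_i)$ (in your convention $H^{*+1}$), in degrees determined by the topology of $W_i\simeq S^3$ alone; hence $SH^*_+(Y_1)$ and $SH^*_+(Y_2)$ agree as graded groups and cannot separate $Y_1$ from $Y_2$. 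The failure is structural, not just computational: by Murphy's $h$-principle and the uniqueness of flexible Weinstein structures in a fixed formal class \cite[Chapter 14]{MR3012475}, loose attaching data with the same formal invariants yield Weinstein-homotopic flexible fillings and hence contactomorphic boundaries, so loose Legendrians can contribute at most one example beyond the standard one --- exactly the pair $ST^*S^3$ versus $\partial(\mathrm{Flex}(T^*S^3))$ of \cite{Nonsimple}. Your proposed escape (distinct rotation classes within one formal push-off class forcing different gradings) is the step that would fail.

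The paper's third embedding is of a different nature: it takes $\Lambda$ \emph{non-loose}, namely the Legendrian boundary of a flexible Lagrangian $\hat{L}$ in the ball with $L\ne S^3$ from \cite{flexible}, so that the branched cover is $\partial W(L\# L)$, the boundary of an exotic $T^*S^3$. It is then distinguished from the other two not by comparing $SH^*_+$ groups (which requires asymptotic dynamical convexity to even be a contact invariant, and ADC is exactly what fails here) but by the contactomorphism-invariant \emph{property} of ADC itself: Propositions \ref{prop:not} and \ref{prop:ADC} show $SH^{3}_{+,S^1}(W(L\#L))\ne 0$, fed by index-zero closed geodesics in the nontrivial free homotopy classes of $L\#L$ which become contractible after the flexible cobordism, so $\partial W(L\#L)$ is not asymptotically dynamically convex, whereas $ST^*S^3$ and $\partial(\mathrm{Flex}(T^*S^3))$ both are. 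Note also that this runs opposite to your closing heuristic: the obstruction in dimension three is not a finite supply of rotation classes but the unavoidable failure of ADC for $L\ne S^3$ (Proposition \ref{prop:ADC}), which is why the paper gets three examples rather than infinitely many.
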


\subsection*{Acknowledgments}
The author would like to thank Roger Casals for his comments and interests, and Ruizhi Huang for helpful conversations.

\section{Proof of the $n\ge 4$ case}
The starting point is the flexible Lagrangian (with Legendrian boundary) $\hat{L}$ in the unit ball $\mathbb{D}^n\subset \mathbb{C}^n$ in the construction of the exotic $T^*S^n$ \cite[Theorem 4.7]{flexible}. Here we list the properties of them.
\begin{enumerate}
    \item $\hat{L}_k$ is a closed manifold $L_k$ minus a disk. The Legendrian boundary of $\partial \hat{L}_k$ is formally Legendrian isotopic to the standard unknot in $(S^{2n-1},\xi_{\std})$. 
    \item $\hat{L}_k$ is a flexible Lagrangian. In particular, when we attach a critical handle along $\partial \hat{L}_k$, we get an exotic $T^*S^n$, which is obtained from $T^*L_k$ and a flexible Weinstein cobordism.
    \item $\pi_1(L_k)=0$ and $H^2(L_k)=\Z^{2k}$. In view of \cite[Theorem 4.7]{flexible}, to have such $\widehat{L}_k$, in particular, to have $\partial \hat{L}_k$ is formally Legendrian isotopic to the standard unknot in $(S^{2n-1},\xi_{\std})$, we require $L_k$ to have the following properties.
    \begin{enumerate}
        \item $L_k$ is stably trivializable, which will imply that $TL_k\otimes \C$ is a trivial complex bundle.
        \item $\chi(L_k)=2$ when $n$ is even.
        \item $\chi_{\frac{1}{2}}(L_k):=\sum_{i=0}^m \rank H_i(L_k)=1\mod 2$ when $n=2m+1>3$.
    \end{enumerate}
    We can build such $L_k$ as the boundary of a regular neighborhood of an embedding of a CW complex $W_k$ in $\R^{n+1}$ with $2k$ $2$-cells and $2k$ $3$-cells with trivial attaching maps. Then $L_k$ is stably trivializable. When $n$ is even, we have $\chi(L_k)=2\chi(W_k)=2$. Moreover, we have $\pi_i(L_k)\to \pi_i(W_k)$ is an isomorphism when $i<\frac{n+1}{2}$. In particular, we have $\pi_1(L_k)=0$ and $H^2(L_k)=\Z^{2k}$. When $n>5$ odd, we have $\chi_{\frac{1}{2}}(L_k)=1+2k+2k=1\mod 2$ and when $n=5$, we have $\chi_{\frac{1}{2}}(L_k)=1+2k=1\mod 2$. In other words, all the conditions can be arranged.
\end{enumerate}

\begin{proof}[Proof of Theorem \ref{thm:main}]
Let $\Lambda_k$ denote the Legendrian boundary of $\hat{L}_k$. We claim the contact manifold $Y_k$ obtained as the boundary of the Weinstein domain given by attaching a handle along $\Lambda_k\#\Lambda_k$ is different for different $k$. Then the theorem follows from the same proof of \cite[Theorem 1.1]{Nonsimple}. First note that $\hat{L}_k\sqcup \hat{L}_k$ is a flexible Lagrangian in $D^{2n}\natural D^{2n}$, where $\natural$ is the boundary connected sum. By \cite{ambient}, there is a Lagrangian cobordism from $\Lambda_k\sqcup\Lambda_k$ to $\Lambda_k\#\Lambda_k$ in the symplectization of $(S^{2n-1},\xi_{\std})$, such that the Lagrangian cobordism is also flexible. More precisely, the Legendrian sum, i.e.\ the ambient $0$-surgery in \cite{ambient}, can be understood as a two-steps procedure: we first attach a $1$-handle to $(S^{2n-1},\xi_{\std})$, where the attaching isotropic sphere $S^0$ is two points from each component of $\Lambda_k\sqcup\Lambda_k$. The resulting Weinstein cobordism contains a Lagrangian cobordism from  $\Lambda_k\sqcup\Lambda_k$ to the connected sum. Then we attach another $2$-handle, where the attaching circle is the union of the core of the $1$-handle and the isotropic arc in the construction of the Legendrian sum, without changing the Lagrangian cobordism. This handle will cancel the previously attached $1$-handle symplectically, hence we get a flexible Lagrangian cobordism from $\Lambda_k\sqcup\Lambda_k$ to $\Lambda_k\#\Lambda_k$ in the symplectization of $(S^{2n-1},\xi_{\std})$. As a consequence, $\Lambda_k\#\Lambda_k$ has a flexible Lagrangian filling by $\widehat{L_k\#L_k}$, i.e.\ $L_k\# L_k$ minus a disk. Therefore the resulted contact manifold $Y_k$, which is almost contactomorphic to $(S^*TS^n,\xi_{\std})$, has a Weinstein filling $W_k$ from attaching subcritical/flexible handles to $T^*(L_k\#L_k)$.

\begin{claim}
The flexible cobordism between $ST^*(L_k\#L_k)$ and $Y_k$ has no $1$-handles nor $n$-handles.
\end{claim}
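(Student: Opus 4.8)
The plan is to read off the handle indices of $X_k$ from the homology of the Lagrangian $\widehat{L_k\#L_k}$. Write $M:=L_k\#L_k$ and $\hat L:=\widehat{L_k\#L_k}=M\setminus D^n$. The flexible Lagrangian $\hat L\subset D^{2n}$ has a Weinstein neighborhood $T^*\hat L$, and by flexibility $D^{2n}$ is Weinstein homotopic to $T^*\hat L$ with a flexible Weinstein cobordism $C$ attached on top, i.e.\ $D^{2n}=T^*\hat L\cup C$ (this is the same mechanism producing property (2) above). Since $W_k=D^{2n}\cup H_{\mathrm{cr}}$, where $H_{\mathrm{cr}}$ is the critical handle attached along $\partial\hat L=\Lambda_k\#\Lambda_k\subset\partial D^{2n}$, I would first slide $H_{\mathrm{cr}}$ across $C$ so that it is attached along the copy of $\Lambda_k\#\Lambda_k$ lying in $\partial(T^*\hat L)$: the Lagrangian $\hat L$ persists through $C$, so its boundary Legendrian at $\partial(T^*\hat L)$ is isotopic, through the Lagrangian collar, to the one at $\partial D^{2n}$. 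Capping the boundary of the zero section turns $T^*\hat L\cup H_{\mathrm{cr}}$ into $T^*M$, so after the slide $W_k=T^*M\cup C$, and hence $X_k=C$.

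It then remains to locate the handle indices of $C$. As $T^*\hat L\simeq\hat L$ and $D^{2n}$ is contractible, the handles of $C$ must kill $\widetilde H_*(\hat L)$; since $2n\ge 8$ and $M$ is simply connected, these can be arranged minimally, as one index-$(i{+}1)$ handle per generator of $H_i(\hat L)$. So I would pin down the degrees carrying $\widetilde H_*(\hat L)$. Because $M$ is simply connected (a connected sum of the simply connected $L_k$'s), $\widetilde H_0(\hat L)=H_1(\hat L)=0$, so the homology begins in degree $2$ and the handles begin in index $3$ --- in particular there are no $1$-handles. At the top, the long exact sequence of the pair $(M,\hat L)$ together with $H_n(\hat L)=0$ gives $H_{n-1}(\hat L)\cong H_{n-1}(M)\cong H^1(M)=0$ by Poincar\'e duality, so $\widetilde H_*(\hat L)$ vanishes in degrees $\ge n-1$ and the handles of $C$ have index $\le n-1$ --- in particular there are no $n$-handles. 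Thus $\widetilde H_*(\hat L)$ is concentrated in degrees $2,\dots,n-2$, the handles of $X_k=C$ lie in indices $3,\dots,n-1$, and the claim follows.

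The hard part will be justifying the handle slide in the first step: that $H_{\mathrm{cr}}$ can be reattached directly to $T^*\hat L$ across the subcritical/flexible cobordism $C$ without creating or destroying handles of $C$. This is precisely where the flexibility of $\hat L$ and the compatibility of the Lagrangian collar with the ambient Weinstein structure are used, and it is the geometric core of the exotic $T^*S^n$ construction in \cite{flexible}; concretely one must check that the attaching sphere of $H_{\mathrm{cr}}$ and the belt spheres of the handles of $C$ can be made disjoint in the intermediate contact hypersurface. By contrast the homological bookkeeping of the second step is routine, the one delicate input being the vanishing $H_{n-1}(\hat L)=0$, which is exactly what rules out an $n$-handle and which rests on $M$ being simply connected.
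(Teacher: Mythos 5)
Your proposal is correct in substance, and it actually proves something sharper than the claim (handles only in indices $3,\dots,n-1$), but note how the labor divides differently from the paper. The splitting $W_k=T^*(L_k\#L_k)\cup C_k$ that you spend your first step re-deriving is not part of the claim's burden: the paper established it in the sentence immediately preceding the claim, as the output of the flexible-Lagrangian construction of \cite{flexible} together with the flexible cobordism from $\Lambda_k\sqcup\Lambda_k$ to $\Lambda_k\#\Lambda_k$. So your ``hard part'' is an input, not something to be re-proved. What the paper's proof actually consists of is your second step in different packaging: $\pi_1(C_k)=0$ by van Kampen, control of the relative homology via the long exact sequence of $(W_k,T^*(L_k\#L_k))$ and excision, Smale's handle rearrangement in dimension $2n\ge 8$ \cite[\S 7, 8]{MR0190942} to get a topological decomposition with no $1$- or $n$-handles, and then --- the step your write-up leaves implicit and that you must include --- the h-principle for flexible Weinstein cobordisms \cite[Chapter 14]{MR3012475} to realize the rearranged \emph{topological} handle decomposition as a Weinstein one. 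Your bookkeeping via $H_j(C_k,\partial_-C_k)\cong\widetilde H_{j-1}(\hat L)$ is equivalent to the paper's by excising the critical handle, and is in fact the more accurate version: the paper asserts $H_{n-1}(C_k,ST^*(L_k\#L_k))=0$, whereas this group is $\widetilde H_{n-2}(\hat L)\cong H_{n-2}(L_k\#L_k)\cong\Z^{4k}\ne 0$, exactly as your computation shows; the paper's conclusion survives because Smale's elimination of $n$-handles needs only $H_n(C_k,\partial_-)=0$ and \emph{torsion-freeness} of $H_{n-1}(C_k,\partial_-)$ (plus simple connectivity), and your ``one index-$(i+1)$ handle per generator'' likewise presumes $\widetilde H_*(\hat L)$ free --- true here by the explicit construction of $L_k$, but say so.

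One genuine caution on the step you flagged as delicate: your proposed verification --- making the attaching sphere of $H_{\mathrm{cr}}$ disjoint from the belt spheres of the handles of $C$ --- cannot be achieved by general position. In a $(2n-1)$-dimensional level set the attaching sphere has dimension $n-1$ and the belt sphere of an index-$j$ handle has dimension $2n-1-j$, so for subcritical $j\le n-1$ the expected intersection dimension $n-1-j$ is nonnegative. The disjointness holds for a structural reason instead: by regularity of the flexible Lagrangian, $C$ contains the Liouville-invariant Lagrangian collar over $\partial\hat L$, and the Legendrian attaching sphere is transported down along this collar, which avoids the cores and cocores of the handles of $C$; this is precisely the mechanism of \cite{flexible} and of the two-step Legendrian-sum argument via \cite{ambient} quoted in the paper. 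Relatedly, after the slide the cobordism sitting atop $T^*(L_k\#L_k)$ is not literally $C$ but $C$ with its lower boundary surgered along the handle-attachment region; this changes neither the handle count nor the relative homology, but the identification $X_k=C$ should be stated with that caveat.
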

\begin{proof}
Let $C_k$ denote the cobordism. By van Kampen theorem, we have $\pi_1(C_k)=\pi_1(ST^*(L_k\#L_k))=0$. Now since $L_k\#L_k\to W_k$ induces an isomorphism on $n$-th homology, the long exact sequences of $(W_k,T^*(L_k\#L_k))$ and excision implies that $H_n(C_k,ST^*(L_k\#L_k))=H_{n-1}(C_k,ST^*(L_k\#L_k))=0$. By Smale's simplification of the handle representation, we know that $C_k$ has a handle decomposition without $1$-handles nor $n$-handles since $\dim C_k=2n\ge 8$ \cite[\S 7, 8]{MR0190942}. Finally, since $C_k$ is flexible, such topological handle decomposition can be presented in a symplectic way \cite[Chapter 14]{MR3012475}.
\end{proof}

\begin{claim}
$Y_k$ is asymptotically dynamically convex. 
\end{claim}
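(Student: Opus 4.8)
The plan is to prove the claim by combining a base case with a surgery-stability result for asymptotic dynamical convexity, exploiting the handle structure isolated in the previous Claim. Before anything else I would record that the grading needed to even state the ADC condition is in place: since $L_k$, and hence $L_k\#L_k$, is stably trivializable, the bundle $T(L_k\#L_k)\otimes\C$ is trivial, so $c_1(T^*(L_k\#L_k))=0$ and $c_1(W_k)=0$. Consequently the Conley--Zehnder index of every contractible Reeb orbit is a well-defined integer, computed via a capping in the filling $W_k$, and the inequality defining ADC (in Lazarev's convention, $\mathrm{CZ}(\gamma)>3-n$ for low-action contractible orbits) is meaningful for $Y_k$.

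For the base case I would show that $ST^*(L_k\#L_k)$ is ADC. The key point is simple connectivity: from $\pi_1(L_k)=0$ and $n\ge 4$, van Kampen gives $\pi_1(L_k\#L_k)=0$, whence $\pi_1(ST^*(L_k\#L_k))=\pi_1(S^{n-1})=0$. For unit cotangent bundles of simply connected closed manifolds the Reeb flow is the cogeodesic flow, and the Conley--Zehnder index of an (iterated) closed geodesic grows with its Morse index; after a small nondegenerate perturbation one obtains a sequence of contact forms whose low-action orbits all lie above the ADC threshold. I would cite \cite{ADC} for the statement that such $ST^*Q$ are ADC, using the stable triviality of $T(L_k\#L_k)\otimes\C$ again to pin down the grading.

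The main step is to propagate ADC across the cobordism $C_k$ from $ST^*(L_k\#L_k)$ to $Y_k$. By the previous Claim, $C_k$ is a flexible Weinstein cobordism all of whose handles have index between $2$ and $n-1$; in particular it has no critical (index-$n$) handles and no index-$1$ handles. I would then invoke the surgery-stability of asymptotic dynamical convexity from \cite{ADC}: attaching subcritical, flexible Weinstein handles in this admissible index range to a Weinstein filling of an ADC contact manifold again yields a filling of an ADC contact manifold, because the Reeb orbits created by such handles have Conley--Zehnder index above the threshold $3-n$, while the orbits inherited from the lower boundary keep their indices under the cobordism. The hypotheses of that result are exactly the index constraints furnished by the previous Claim. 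Applying it handle by handle, starting from the ADC manifold $ST^*(L_k\#L_k)$, yields that $Y_k$ is ADC.

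The hard part is the surgery step, and specifically the bookkeeping that identifies our geometric situation with the hypotheses of Lazarev's stability theorem: one must verify that the new Reeb orbits produced by an index-$\ell$ subcritical handle, for $2\le\ell\le n-1$, indeed satisfy $\mathrm{CZ}>3-n$, and that it is precisely the exclusion of $1$-handles and $n$-handles (the content of the previous Claim) that rules out the low-index orbits which would otherwise spoil the estimate. A secondary technical point is the consistency of cappings and gradings across $C_k$, so that the indices computed in $W_k$ agree with those used in the base case; this is where the triviality of $T(L_k\#L_k)\otimes\C$ enters once more.
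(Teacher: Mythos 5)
Your proposal is correct and follows essentially the same route as the paper: the paper notes that $ST^*(L_k\#L_k)$ is ``tautologically'' asymptotically dynamically convex for $\dim \ge 4$ (your base case via Morse indices of geodesics clearing the threshold $\mu_{CZ}+n-3>0$), then uses simple connectivity and \cite[Theorem 3.14]{ADC} to propagate ADC across the subcritical handles of index $2,\ldots,n-1$ furnished by the preceding Claim. Your additional remarks on $c_1=0$ and the consistency of gradings are implicit in the paper's setup (stable trivializability of $L_k$) rather than spelled out in its proof of this claim.
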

\begin{proof}
Since $\dim n \ge 4$, $ST^*(L_k\#L_k)$ is tautologically asymptotically dynamically convex. $ST^*(L_k\#L_k)$ is simply connected, the attachment of $2,\ldots, n-1$-subcritical handles does not change the  asymptotically dynamical convexity by \cite[Theorem 3.14]{ADC}. In view of the first claim, we have $Y_k$ is also  asymptotically dynamically convex. 
\end{proof}

\begin{claim}
$SH^{n-1}_+(W_k;\Q)$ are different for different $k$.
\end{claim}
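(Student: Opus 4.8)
The plan is to use the second Claim: since $Y_k$ is asymptotically dynamically convex, by \cite{ADC} the positive symplectic cohomology $SH^{*}_+(Y_k;\Q)$ is a contact invariant, computable from any Weinstein filling, so $SH^{n-1}_+(W_k;\Q)=SH^{n-1}_+(Y_k;\Q)$. It therefore suffices to compute the left-hand side from the explicit filling and show its rank depends on $k$. I would exploit that $W_k$ is obtained from $T^*(L_k\#L_k)$ by attaching the handles of $C_k$, which by the first Claim have indices $2,\dots,n-1$ and are hence subcritical. By Cieliebak's invariance of symplectic cohomology under subcritical handle attachment, the Viterbo transfer $SH^*(W_k;\Q)\to SH^*(T^*(L_k\#L_k);\Q)$ is an isomorphism; and by the Viterbo--Abbondandolo--Schwarz isomorphism, valid over $\Q$ without twisting because $L_k\#L_k$ is spin (being stably parallelizable), one has $SH^*(T^*(L_k\#L_k);\Q)\cong H_{n-*}(\cL(L_k\#L_k);\Q)$, where $\cL$ denotes the free loop space.

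The computation combines two inputs through the tautological long exact sequence
\[
\cdots \to H^{k}(W_k;\Q)\xrightarrow{\,c_{W_k}\,} SH^{k}(W_k;\Q)\to SH^{k}_+(W_k;\Q)\to H^{k+1}(W_k;\Q)\to\cdots.
\]
First, setting $N=L_k\#L_k$, in degree $n-1$ we have $SH^{n-1}(W_k;\Q)\cong H_1(\cL N;\Q)$. Since $N$ is simply connected, the Serre spectral sequence of $\Omega N\to\cL N\to N$ degenerates in total degree $1$ (the constant-loop section makes the base classes permanent cycles, killing the only possible transgression), giving $H_1(\cL N;\Q)\cong H_1(\Omega N;\Q)\cong \pi_2(N)\otimes\Q\cong H_2(N;\Q)$. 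As $H_2(L_k;\Q)\cong H^2(L_k;\Q)=\Q^{2k}$ and $n\ge 4$, the connected-sum formula yields $H_2(L_k\#L_k;\Q)=\Q^{4k}$, so $SH^{n-1}(W_k;\Q)\cong\Q^{4k}$. Second, the canonical map $c_{W_k}$ vanishes in degree $n-1$: by naturality of $H^*\to SH^*$ under the Viterbo transfer for the Liouville subdomain $T^*N\subset W_k$, its image maps isomorphically into the image of $c_{T^*N}\colon H^{n-1}(N;\Q)\to SH^{n-1}(T^*N;\Q)$; but $H^{n-1}(N;\Q)\cong H_1(N;\Q)=0$, so this image is zero, whence $c_{W_k}=0$ in degree $n-1$.

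Granting these inputs, the long exact sequence gives an injection $\Q^{4k}\cong SH^{n-1}(W_k;\Q)\hookrightarrow SH^{n-1}_+(W_k;\Q)$ whose cokernel injects into $H^{n}(W_k;\Q)$. By the first Claim there are no $n$-handles, so the long exact sequence of $(W_k,T^*N)$ gives $H_n(W_k)\cong H_n(N)=\Z$, hence $H^n(W_k;\Q)\cong\Q$. Therefore $\dim_\Q SH^{n-1}_+(W_k;\Q)\in\{4k,\,4k+1\}$, which is strictly increasing in $k$; in particular these groups are pairwise non-isomorphic, proving the claim, and by the contact invariance above this shows the $Y_k$ are pairwise non-contactomorphic, completing Theorem \ref{thm:main}.

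The main obstacle is the second input: rigorously establishing the compatibility of the canonical map $H^*(W)\to SH^*(W)$ with the Viterbo transfer for $T^*N\subset W_k$, together with the identification, under the Viterbo isomorphism, of $\Ima c_{T^*N}$ in degree $n-1$ with the span of constant loops. This is where the grading and orientation conventions must be fixed carefully, and where one must confirm that the $4k$ classes detecting $k$ are genuinely of positive action rather than being absorbed into $H^*(W_k)$; the remaining steps are spectral-sequence and handle-homology bookkeeping that are insensitive to the precise attaching maps of the subcritical handles.
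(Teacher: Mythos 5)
Your proof is correct, and it reaches the paper's conclusion by a genuinely different computational route. You share the first two steps with the paper --- Cieliebak's subcritical invariance of $SH^*$ and the Viterbo isomorphism over $\Q$ (legitimate since $N=L_k\#L_k$ is spin) --- but the paper then invokes the \emph{relative} version $SH^*_+(W_k;\Q)\cong H_{n-*}(\Lambda N, N;\Q)$ directly and computes $H_1(\Lambda N,N;\Q)=\Q^{4k}$ via Sullivan minimal models and Vigu\'e-Poirrier--Sullivan, the $4k$ closed degree-two generators $x_i$ contributing the classes $sx_i$. You instead stay with the absolute isomorphism, compute $SH^{n-1}(W_k;\Q)\cong H_1(\cL N;\Q)\cong H_2(N;\Q)=\Q^{4k}$ via the loop fibration and Hurewicz (your spectral sequence step is correct: the constant-loop section forces the bottom row to consist of permanent cycles, so $H_1(\cL N;\Q)\cong H_1(\Omega N;\Q)\cong\pi_2(N)\otimes\Q$), and then feed this into the tautological exact sequence, killing $c_{W_k}$ in degree $n-1$ by naturality under the Viterbo transfer (using $H^{n-1}(N;\Q)\cong H_1(N;\Q)=0$) and bounding the cokernel by $H^n(W_k;\Q)\cong\Q$, which is right since the cobordism carries only handles of index $2,\dots,n-1$. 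The cost of avoiding the relative Viterbo isomorphism is that you only pin down $\dim_\Q SH^{n-1}_+(W_k;\Q)\in\{4k,4k+1\}$ rather than the exact value $4k$; this still suffices, as these sets are pairwise disjoint for distinct $k$. The ``main obstacle'' you flag --- compatibility of $H^*(W)\to SH^*(W)$ with the transfer map --- is standard Viterbo functoriality (it is part of the Eilenberg--Steenrod-type package of Cieliebak--Oancea), so it is not a genuine gap; but note you could have bypassed both this and the residual indeterminacy by running the long exact sequence of the pair $(\cL N,N)$: since $H_1(N;\Q)=0$ and $H_0(N;\Q)\to H_0(\cL N;\Q)$ is injective, one gets $H_1(\cL N,N;\Q)\cong H_1(\cL N;\Q)=\Q^{4k}$, recovering the paper's exact answer by your elementary topological methods, without minimal models.
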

\begin{proof}
    Since $L_k$ is stably trivializable, $L_k$ is spin. Hence $L_k\#L_k$ is also spin, as $w_2(L_k\# L_k)=w_2(L_k)\oplus w_2(L_k) = 0 \in H^2(L_k;\Z/2)\oplus H^2(L_k;\Z/2)=H^2(L_k\#L_k;\Z/2)$. Since $W_k$ is obtained from $T^*(L_k\#L_k)$ by attaching subcritical handles, by \cite{subcritical}, we have  $SH^*(W_k;\Q)=SH^*(T^*(L_k\#L_k);\Q)$. Then by the Viterbo isomorphism  \cite{MR2190223,MR3444367,MR2276534,MR1726235}, we have $SH^*(W_k;\Q)=SH^*(T^*(L_k\#L_k);\Q)=H_{n-*}(\Lambda (L_k\# L_k);\Q)$ and $SH^*_+(W_k;\Q)=H_{n-*}(\Lambda (L_k\# L_k), L_k\#L_k;\Q)$. Since $L_k\#L_k$ is simply connected, Sullivan's minimal model $V_k$ of $L_k\# L_k$ has exactly $4k=\rank H^2(L_k\#L_k;\Q)$ generators $x_1,\ldots,x_{4k}$ in degree $2$, which are also closed. Then by \cite{MR455028}, $H_{1}(\Lambda(L_k\# L_k), L_k\#L_k;\Q)=H^{1}(\Lambda(L_k\# L_k), L_k\#L_k;\Q)=H^1(\bigwedge(V_k\oplus sV_k)/\bigwedge V_k)=\Q^{4k}$, generated by $sx_1,\ldots,sx_{4k}\in sV$, where $sV=V[1]$.
\end{proof}

Now since $SH^*_+(W_k;\Q)$ is a contact invariant for those  asymptotically dynamically convex manifolds with Weinstein fillings \cite[Proposition 3.8]{ADC}, we know that $Y_k$ are different contact manifolds, and the theorem follows.

\end{proof}

\section{Discussion of the $n=3$ case}
The fundamental difficulty to apply the above argument to the $n=3$ case is that we never have asymptotically dynamical convexity unless $L=S^3$.
\begin{proposition}\label{prop:not}
Assume the Liouville domain $W$ has vanishing first Chern class and $\partial W$ is simply connected. If $SH^{m}_{+,S^1}(W)\ne 0$ for some $m\ge 2n-3$, then $\partial W$ is not asymptotically dynamically convex.
\end{proposition}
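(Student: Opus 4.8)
The plan is to prove the contrapositive: assuming $\partial W$ is asymptotically dynamically convex, I will show that $SH^m_{+,S^1}(W;\Q)=0$ for every $m\ge 2n-3$, so that any nonvanishing in this range must obstruct convexity. Both hypotheses enter at the outset. Since $c_1(W)=0$, the group $SH^*_{+,S^1}(W)$ carries a genuine $\Z$-grading, and since $\pi_1(\partial W)=0$ every Reeb orbit is contractible, so its Conley--Zehnder index $\mu_{\mathrm{CZ}}(\gamma)$ is unambiguously defined and the (contractible-orbit) convexity condition governs \emph{all} orbits.

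The crux is the grading dictionary relating the cohomological degree of an orbit generator to its reduced SFT degree $|\gamma|:=\mu_{\mathrm{CZ}}(\gamma)+n-3$, the quantity whose positivity defines convexity. In the normalization for which the Viterbo isomorphism, in its $S^1$-equivariant form, reads $SH^*_{+,S^1}(T^*Q;\Q)\cong H^{S^1}_{n-*}(\Lambda Q,Q;\Q)$ \cite{MR1726235}, a good Reeb orbit $\gamma$ contributes to $SH^*_{+,S^1}$ only in cohomological degrees $\le 2n-3-|\gamma|$, the top degree $2n-3-|\gamma|$ being the one read off from $H_{n-*}$, with any $u$-tower descending from there. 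The unit cotangent bundle $ST^*S^n$ illustrates the mechanism on the convex side: its Reeb orbits (iterated geodesics) all have $|\gamma|>0$ with $|\gamma|\to\infty$, so $SH^*_{+,S^1}(T^*S^n)$ is concentrated in degrees $\le 2n-4$, as the proposition demands. Conversely, at $n=3$ a hyperbolic metric on $L^3$ furnishes closed geodesics with $|\gamma|=0$, hence a nonzero class in degree exactly $2n-3$, which is precisely what the proposition detects. Consequently a nonzero class in $SH^m_{+,S^1}(W)$ with $m\ge 2n-3$ must be supported on a good orbit with $|\gamma|=2n-3-m\le 0$, i.e.\ $\mu_{\mathrm{CZ}}(\gamma)\le 3-n$.

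Next I would feed in asymptotic dynamical convexity. By definition there are contact forms $\alpha_k$ for $\xi$ and thresholds $D_k\to\infty$ such that every contractible orbit of $\alpha_k$ of action below $D_k$ satisfies $|\gamma|>0$. Computing $SH^*_{+,S^1}(W)$ through its action filtration, a class in a fixed degree is represented at some finite action $A$; choosing $k$ with $D_k>A$, the relevant truncated complex is generated only by orbits with $|\gamma|>0$, hence occupies cohomological degrees $\le 2n-4$. Passing to the limit, $SH^m_{+,S^1}(W)=0$ for all $m\ge 2n-3$, contradicting the hypothesis and proving the proposition.

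The step I expect to be the main obstacle is exactly this last passage from the orbit-wise index bound, which convexity supplies only below each threshold $D_k$, to the vanishing of the limiting group. A priori a high-action orbit could have $|\gamma|\le 0$; what rescues the argument is that orbits of large SFT degree sit in very negative cohomological degree (as the $T^*S^n$ model makes explicit), so only bounded-action generators can contribute in degrees $\ge 2n-3$, and these are controlled by taking $D_k$ large. Making this compatibility of the action filtration with the $\Z$-grading precise in the $S^1$-equivariant setting---tracking good versus bad orbits and the descending direction of the $u$-tower so that the threshold $2n-3$ is not shifted---is the delicate bookkeeping; once it is in place the contradiction is immediate, and the contact invariance of $SH^*_{+,S^1}$ for convex fillings \cite{ADC} can be used to organize the computation.
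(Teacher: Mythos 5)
Your proof follows essentially the same route as the paper's: assuming asymptotic dynamical convexity, the filtered positive $S^1$-equivariant symplectic cohomology below each threshold $D_k$ is generated by orbits in degree $n-\mu_{CZ}=2n-3-|\gamma|<2n-3$, and $SH^*_{+,S^1}(W)=\varinjlim SH^{*,<D_k}_{+,S^1}(W_k)$ along continuation and Viterbo transfer maps forces vanishing in degrees $\ge 2n-3$ --- exactly the paper's argument, which makes your ``delicate bookkeeping'' precise by invoking the spectral sequence of \cite{MR3734608} and its compatibility with those maps. One minor correction that only strengthens your argument: over $\Q$ each good Reeb orbit contributes a \emph{single} generator to positive equivariant symplectic cohomology (there is no $u$-tower in the positive part), so your degree bound $\le 2n-3-|\gamma|$ is in fact an equality at the first page.
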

\begin{proof}
Assume $\partial W$ is asymptotically dynamically convex, i.e.\ there are nesting exact subdomains $\ldots \subset W_n\subset \ldots W_1=W$ with $W_i$ Liouville homotopic to $W$, such that there exist $D_1<\ldots < D_n<\ldots  \to \infty$ with the Reeb orbits on $\partial W_k$ of period up to $D_k$ are non-degenerate and $\mu_{CZ}+n-3>0$. By \cite{MR3734608}, there is a spectral sequence converging to $SH^*_{+,S^1}(W_k)$ with the first page spanned by Reeb orbits of $\partial W_k$ with grading $n-\mu_{CZ}$. The same spectral sequence holds for filtered $S^1$ equivariant positive symplectic cohomology $SH^{*,<D_k}_{+,S^1}(W_k)$ generated by Reeb orbits of period up to $D_k$ and is compatible with continuation maps and Viterbo transfer maps. As a consequence, $SH^{*,<D_k}_{+,S^1}(W_k)$ is supported in grading $*<2n-3$, and the same holds for $SH^*_{+,S^1}(W)=\varinjlim SH^{*,<D_k}_{+,S^1}(W_k)$, which contradicts the condition.
\end{proof}

\begin{proposition}\label{prop:ADC}
Let $W(L)$ denote the exotic $T^*S^3$ obtained from $T^*L$ for any oriented $3$-fold $L$ as in \cite[Theorem 4.7]{flexible}, i.e.\ $L\ne S^3$. Then $\partial W(L)$ is not asymptotically dynamically convex if $L\ne S^3$. 
\end{proposition}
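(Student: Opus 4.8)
The plan is to reduce Proposition~\ref{prop:ADC} to Proposition~\ref{prop:not}, since the latter gives a clean homological criterion obstructing asymptotic dynamical convexity. Concretely, I would set $W=W(L)$ and verify the two hypotheses of Proposition~\ref{prop:not}. First, $W(L)$ is an exotic $T^*S^3$ built from $T^*L$ by attaching a flexible Weinstein cobordism as in \cite[Theorem 4.7]{flexible}; its boundary $\partial W(L)$ is almost contactomorphic to $(ST^*S^3,\xi_{\std})$, which is simply connected, so the simple-connectivity hypothesis holds. The vanishing of $c_1(W(L))$ should follow from the stable triviality/parallelizability built into the construction (an oriented $3$-manifold is parallelizable, so $T^*L$ is stably trivial, and the flexible handles do not create first Chern class), giving the $c_1=0$ hypothesis; here $n=3$ so $2n-3=3$.

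The heart of the argument is therefore to exhibit a nonzero class in $SH^{m}_{+,S^1}(W(L);\Q)$ for some $m\ge 2n-3=3$. The natural route is exactly the computation already carried out in the third claim of the proof of Theorem~\ref{thm:main}: by subcriticality-invariance \cite{subcritical} and the Viterbo isomorphism \cite{MR2190223,MR3444367,MR2276534,MR1726235}, the ($S^1$-equivariant) positive symplectic cohomology of $W(L)$ is computed by the reduced loop-space homology of $L$, i.e.\ by $H_{n-*}(\Lambda L, L;\Q)$ in the nonequivariant case and its $S^1$-equivariant analogue $H^{S^1}_{*}(\Lambda L, L;\Q)$ in the equivariant case. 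For $L\ne S^3$ a simply connected $3$-manifold one has nontrivial rational homotopy in degrees forcing free loop space homology in arbitrarily high, or at least sufficiently high, degree; I would use the string-topology/minimal-model description (as in the Claim, via \cite{MR455028}) to locate a surviving generator in equivariant degree $\ge 3$. The key input is that $H^2(L;\Q)\ne 0$ whenever $L$ is a simply connected $3$-manifold other than $S^3$ (equivalently $L\ne S^3$ forces $b_2(L)>0$ by Poincar\'e duality), so the minimal model has a degree-$2$ generator $x$, and the corresponding suspension class $sx$ in the (relative) loop space homology produces a nonvanishing class in the required degree range.

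The main obstacle I anticipate is the grading bookkeeping in the $S^1$-equivariant theory: one must confirm that the generator produced by a degree-$2$ cohomology class of $L$ actually lands in symplectic-cohomology degree $m\ge 3$ after applying the Viterbo isomorphism and passing from the nonequivariant to the $S^1$-equivariant version (where the Gysin-type long exact sequence and the $u$-action shift degrees). In the nonequivariant computation of the Claim, the class $sx$ sits in $H_{1}(\Lambda L, L;\Q)$, i.e.\ in $SH^{n-1}=SH^{2}$; the point of restricting to $m\ge 2n-3$ is that the equivariant theory spreads this class out under the $S^1$-action into infinitely many degrees via multiplication by the equivariant parameter, and I must check that at least one such image survives in degree $\ge 2n-3$. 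I would verify this by running the $S^1$-equivariant spectral sequence or Gysin sequence relating $SH^*_{+}$ and $SH^*_{+,S^1}$ and tracking the free $\Q[u]$-module structure generated by $sx$, whose tower of classes $u^j\cdot sx$ necessarily reaches arbitrarily high degree. Once a nonzero class in degree $\ge 2n-3$ is identified, Proposition~\ref{prop:not} applies verbatim and yields the conclusion that $\partial W(L)$ is not asymptotically dynamically convex.
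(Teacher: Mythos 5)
Your reduction to Proposition~\ref{prop:not} is the right frame, and this is indeed how the paper proceeds (the hypotheses $c_1=0$ and simple connectivity of the boundary are checked essentially as you say). But the heart of your argument --- producing the nonzero class in $SH^{m}_{+,S^1}(W(L))$ for $m\ge 3$ --- rests on a false premise. You treat $L$ as a \emph{simply connected} $3$-manifold with $b_2(L)>0$, and invoke Sullivan minimal models to find a degree-$2$ generator $x$ whose suspension $sx$ gives the class. For a closed oriented $3$-manifold this is wrong on both counts: Poincar\'e duality gives $b_2(L)=b_1(L)$, so a simply connected closed $3$-manifold has $b_2=0$; and by the Poincar\'e conjecture the only simply connected closed $3$-manifold is $S^3$ itself. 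Thus in the $n=3$ setting of \cite[Theorem 4.7]{flexible}, $L\ne S^3$ forces $\pi_1(L)\ne 0$, minimal-model techniques do not apply, and your key input $H^2(L;\Q)\ne 0$ fails in precisely the examples the paper cares about (e.g.\ the lens spaces $L(k,1)$ of Proposition~\ref{prop:lens} are rational homology spheres). You have carried over the topology of the $n\ge 4$ construction (simply connected $L_k$ with $H^2(L_k)=\Z^{2k}$) into the $n=3$ case, where it is unavailable. Your fallback --- generating high-degree equivariant classes as a $u$-tower $u^j\cdot sx$ --- is also unjustified: nonconstant loops have finite $S^1$-isotropy, so there is no reason for $H^{S^1}_*(\Lambda L,L;\Q)$ to contain a free $\Q[u]$-summand, and generically $u$ acts nilpotently on such classes.

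The paper's actual mechanism is different and crucially uses $\pi_1(L)\ne 0$: for a metric contact form on $ST^*L$, each nontrivial conjugacy class of $\pi_1(L)$ contains an energy-minimizing geodesic loop, giving a Reeb orbit of Conley--Zehnder index $0$. These orbits are noncontractible in $T^*L$ (so they do not obstruct convexity there), but they contribute to $H^{S^1}_0(\Lambda L, L;\Q)\ne 0$, which under the $\Z$-graded identification $SH^*_{+,S^1}(W(L))=H^{S^1}_{3-*}(\Lambda L,L)$ yields $SH^{3}_{+,S^1}(W(L))\ne 0$, exactly the degree $m=2n-3=3$ needed for Proposition~\ref{prop:not}. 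Note two further points your sketch glosses over: the cobordism from $T^*L$ to $W(L)$ contains a \emph{critical} flexible handle, so the invariance you need is the flexible-surgery statement from \cite{surgery}, not subcritical invariance \cite{subcritical}; and the $\Z$-graded identification requires matching the trivialization of $\det_\C TW(L)$ restricted to $L$ with the natural one on $T^*L$, which the paper verifies via \cite[Corollary 4.5]{flexible} together with uniqueness of the gluing of trivializations (using $H^1(\partial\widehat{L})=0$). Without the $\pi_1$-based class and the trivialization check, the proposal does not close.
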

\begin{proof}
Since $L\ne S^3$, $L$ is not simply connected. If the contact form on $ST^*L$ is induced from a Riemannian metric, then there is a non-trivial geodesic loop in each non-trivial conjugacy class of $\pi_1(L)$ minimizing the length (or the energy functional). Under non-degeneracy assumptions, the Morse-index of such geodesic loop, i.e.\ the Conley-Zehnder index of the corresponding Reeb orbit, is $0$, which is a borderline failure for  asymptotically dynamical convexity. Moreover, this loop contributes non-trivially in $H^{S^1}_0(\Lambda L)$, the $S^1$ equivariant homology of the free loop space. However, it is important to note that such Reeb orbit is not contractible, hence is not considered in the definition of asymptotically dynamical convexity for $T^*L$. On the other hand, those homotopy classes will be trivialized after attaching the flexible cobordism to obtain $W(L)$. We claim the unique trivialization $\eta$ of $\det_{\C}W(L)$ restricted to $L$ is the natural trivialization $\det_{\C} T^*L = \det_{\R}L \otimes \C$ used to obtain the $\Z$-graded isomorphism $SH^*_{S^1}(T^*L)=H^{S^1}_{n-*}(\Lambda L)$. This can be seen from the construction of $W(L)$ as follows. It follows from \cite[Corollary 4.5]{flexible} that $\eta$ restricted $\widehat{L}\subset \C^n$ is the restriction of the natural one. Since $H^1(\partial \widehat{L})=0$, there is a unique way glue the trivialization on $\widehat{L}$ and the core of the critical handle. Hence the claim follows. As a consequence, we have $SH^*_{+,S^1}(W(L))=H^{S^1}_{3-*}(\Lambda L, L)$ as $\Z$-graded spaces by \cite{surgery}, where the positive $S^1$-equivariant symplectic cohomology is graded by $3$ minus the Conley-Zehnder index.  In particular, we have $SH^{3}_{+,S^1}(W(L))\ne 0$, hence $\partial W(L)$ is not asymptotically dynamically convex by Proposition \ref{prop:not}.
\end{proof}

\begin{remark}
It is also not clear if we actually need critical handles to build $W(L)$ from $T^*L$, as $ST^*L$ and the flexible cobordism are no longer simply connected. Presumably, there should be a definitive answer to this question if one takes a closer look at the topological type of the flexible cobordism.
\end{remark}

The absence of asymptotically dynamical convexity makes it  hard to tell the contact boundaries apart. However, in some special cases, we can still argue the contact boundaries are different.

\begin{proposition}\label{prop:lens}
Let $W_k$ denote the exotic $T^*S^3$ obtained from $T^*L(k,1)$ for the lens space $L(k,1)$ in \cite[Theorem 4.7]{flexible}. Then $\partial W_k$ is different from $\partial W_{k'}$ if $k\ne k'$. 
\end{proposition}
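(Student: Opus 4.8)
The plan is to isolate, inside the positive $S^1$-equivariant symplectic cohomology of the filling, a single numerical quantity that is affine in $k$ and that survives as a genuine contact invariant of $Y_k := \partial W_k$ even though asymptotic dynamical convexity fails (Proposition \ref{prop:ADC}). Exactly as in the proof of Proposition \ref{prop:ADC}, the surgery formula of \cite{surgery} together with the Viterbo isomorphism gives, as $\Z$-graded $\Q$-vector spaces,
\[
SH^*_{+,S^1}(W_k;\Q)\;=\;H^{S^1}_{3-*}\big(\Lambda L(k,1),\,L(k,1);\Q\big),
\]
with the grading equal to $3$ minus the Conley--Zehnder index and with the trivialization of $\det_{\C}W_k$ restricting to the canonical one on $L(k,1)$. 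The invariant I would extract is the mean Euler characteristic $\chi_m$, i.e.\ the Ces\`aro average of $(-1)^i\dim SH^i_{+,S^1}(W_k;\Q)$; the whole argument then reduces to (i) computing $\chi_m$ in terms of the free loop space of $L(k,1)$ and checking it depends on $k$, and (ii) arguing that $\chi_m$ is a contactomorphism invariant of $Y_k$.

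For (i) I would use the round metric on $L(k,1)=S^3/(\Z/k)$, whose geodesic flow is periodic and descends from that of the round $S^3$; this provides a completely explicit Morse--Bott model for the Reeb flow on $ST^*L(k,1)$. Decomposing $\Lambda L(k,1)=\bigsqcup_{j\in\Z/k}\Lambda_j L(k,1)$ over free homotopy classes and using that the deck transformations are rotations homotopic to the identity, each of the $k$ components is rationally equivalent to $\Lambda S^3$, so that $H^{S^1}_*(\Lambda L(k,1),L(k,1);\Q)$ is rationally a sum of $k-1$ copies of $H^{S^1}_*(\Lambda S^3;\Q)$ together with one copy taken relative to the constant loops. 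The iterated closed geodesics organize into Morse--Bott families whose equivariant critical manifolds (spaces of oriented great circles, i.e.\ $S^2\times S^2$, and their analogues in the nontrivial classes) sit in a fixed parity of degrees; summing their contributions shows that the asymptotic density of generators, hence $\chi_m$, is an affine function of the number of free homotopy classes, of the expected form $\chi_m(Y_k)=c_0+(k-1)c_1$ with $c_1\neq 0$. In particular $\chi_m$ separates all the $Y_k$.

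For (ii), which I expect to be the main obstacle, the point is precisely that $\chi_m$ is an asymptotic quantity. The orbits responsible for the failure of asymptotic dynamical convexity --- the $k-1$ minimal geodesics of Conley--Zehnder index $0$ and their low iterates --- occur in bounded degree and therefore do not affect the Ces\`aro limit; the genuine $k$-dependence instead lives in the linear-in-$k$ growth rate of the loop space homology, which is a stable, asymptotic feature. For the same reason $\chi_m$ should be insensitive to the bounded-degree and augmentation-level discrepancies by which $SH^*_{+,S^1}$ itself can fail to be a contact invariant, so that it descends to an invariant of the contact boundary; this is the mechanism already exploited for Brieskorn manifolds and lens spaces in the literature on mean Euler characteristics. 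The delicate part is thus not the loop-space bookkeeping but verifying that in this non-convex, merely Weinstein-fillable setting $\chi_m$ is well defined and genuinely a contactomorphism invariant --- either by a direct neck-stretching comparison of the asymptotic orbit densities of two contactomorphic boundaries, or by computing the intrinsic $S^1$-equivariant contact homology of $Y_k$ through the explicit round-metric model and taking its mean Euler characteristic. Once that invariance is in place, $\chi_m(Y_k)=c_0+(k-1)c_1$ distinguishes the $Y_k$ and the proposition follows.
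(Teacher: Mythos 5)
Your strategy founders on exactly the step you flag as ``the delicate part'', and that step is not a technical verification but the entire content of the proposition. Since asymptotic dynamical convexity fails for $\partial W_k$ (Proposition \ref{prop:ADC}), $SH^*_{+,S^1}$ is not known to be a contact invariant here, and your argument that the discrepancy between two Weinstein fillings of contactomorphic boundaries is confined to bounded degrees is unsupported. The only known invariance mechanism, \cite[Proposition 3.8]{ADC}, compares fillings via transfer maps for a nested family of contact forms with degree bounds on \emph{all} orbits of period up to growing thresholds; when that hypothesis fails there is no a priori relation between the positive equivariant symplectic cohomologies of two fillings in \emph{any} degree range, bounded or not, so ``asymptotic quantities are insensitive to the failure'' is a hope, not an argument. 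There is a second, related problem: for the mean Euler characteristic $\chi_m$ to be well defined and computable from orbit data one needs degree-wise finiteness of generators, i.e.\ Conley--Zehnder index growing with period --- again an ADC-type condition. The flexible critical handle produces orbits of index $1$ with arbitrarily small action and no stated control tying index to period, so for an arbitrary contact form on $Y_k$ (or an arbitrary filling) your Ces\`aro limit may not even exist. Finally, the computational claim in (i) that $c_1\neq 0$ is asserted rather than verified, though that part is at least checkable by the Morse--Bott bookkeeping you sketch.

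The paper's proof avoids all of this by refusing to use an asymptotic invariant and instead pinning down a single degree with explicit dynamical control. With the (perturbed) round metric there is exactly one Reeb orbit $\gamma_i$ of Conley--Zehnder index $0$ in each of the $k-1$ nontrivial free homotopy classes, and all other orbits have index at least $2$; subcritical handles preserve this by (the proof of) \cite[Theorem 3.14]{ADC}, with the grading controlled because the trivialization of the complex determinant bundle extends naturally; and the critical flexible handle only creates orbits of index at least $1$, where by the proof of \cite[Theorem 3.15]{ADC} the index-$1$ orbits arise from single short chords on the small zig-zags of the loose attaching sphere and hence have period smaller than that of the $\gamma_i$. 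Action and index considerations then show the classes of $\gamma_1,\ldots,\gamma_{k-1}$ cannot be killed, so $SH^3_{+,S^1}(W;\Q)\simeq\Q^{k-1}$ for \emph{every} Weinstein filling $W$ of $\partial W_k$ --- filling-independence in this one degree is what substitutes for the missing invariance theorem, and it immediately yields $\partial W_k\neq\partial W_{k'}$ for $k\neq k'$. If you want to salvage your $\chi_m$ approach, you would need exactly this kind of all-degree control over the orbits created by the flexible surgery, which is precisely what is unavailable; the lesson of the paper's argument is that one bounded-degree group with action control suffices, whereas an asymptotic average demands strictly more.
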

\begin{proof}
Let $g$ be the round metric on $L(k,1)$, then for each nontrivial homotopy class of loops, the closed geodesics in this homotopy class is parameterized by a family of $S^2$ indexed by $\N$, with only one of them (i.e.\ the simple loops) realizing the local minimum of the energy functional. Then after a small perturbation of the contact form induced from the round metric, there is a unique Reeb orbit $\gamma_i$, for $i\in \Z/k \backslash \{0\}$, with Conley-Zehnder index $0$ in each nontrivial homotopy class of loops, and all others have  Conley-Zehnder indices at least $2$. Moreover, $\gamma_i$ contributes to the nontrivial class in $SH^{3}_{+,S^1}(T^*L(k,1))=H^{S^1}_{0}(\Lambda L(k,1),L(k,1))$ and $\{\gamma_1,\ldots,\gamma_{k-1}\}$ span a basis of the space. Now $W_k$ is obtained from $T^*L(k,1)$ by attaching flexible handles. Following the argument in Proposition \ref{prop:ADC}, even though we have $2$-handles in the flexible cobordism, (the proof of) \cite[Theorem 3.14]{ADC} can still be applied, as the trivialization of complex determinant bundle can be extended  naturally. As a consequence, after attaching all the subcritical handles, we have a contact form on the boundary such that the all Reeb orbits have Conley-Zehnder indices at least $2$ except for $\gamma_i$\footnote{Strictly speaking, we need to work with a nesting family of Liouville domains with an increasing sequence of period threshold as in \cite{ADC}, we omit this for simplicity. Also by a bit abuse of language,  $\gamma_i$ here are the corresponding orbits after the surgery, as we can assume they are away from the surgery region.}. Finally, we attach the critical flexible handles, by \cite[Theorem 3.15]{ADC}, the new contact boundary has new Reeb orbits with Conley-Zehnder index at least $1$. However by the proof of \cite[Theorem 3.15]{ADC}, those orbits with Conley-Zehnder index $1$ are from single Reeb chords with arbitrarily small period from the small zig-zags on loose Legendrians. In particular, we can assume the corresponding Reeb orbits have period smaller that of $\gamma_i$. As a consequence, $\gamma_i$ still contributes non-trivially to $SH^{3}_{+,S^1}(W)$ and spans the space ($\simeq {\Q}^{k-1}$ if we use $\Q$ as the coefficient), for any Weinstein filling $W$ of $\partial W_k$, as they can not be eliminated by those orbits with Conley-Zehnder index $1$. As a consequence, $\partial W_k\ne \partial W_{k'}$ for $k\ne k'$
\end{proof}

\begin{corollary}
For any $n\ge 3$, there exist infinitely many exotic $T^*S^n$ with the standard almost Weinstein structure, such that the contact boundaries are also pairwisely different.
\end{corollary}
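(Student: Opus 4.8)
The plan is to prove the two ranges $n=3$ and $n\ge 4$ separately, in each case recognizing that the exotic $T^*S^n$ to be distinguished are precisely the Weinstein fillings already produced above, so that the statement is really a repackaging of Theorem \ref{thm:main} and Proposition \ref{prop:lens}. In every case the fact that the domain is smoothly the disk cotangent bundle of $S^n$ and carries the \emph{standard} almost Weinstein structure will be automatic, since each filling is built by attaching a critical Weinstein handle along a Legendrian that is formally isotopic to the standard unknot and bounds a flexible Lagrangian; this is exactly the input of the exotic cotangent bundle construction \cite[Theorem 4.7]{flexible}.

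First I would treat $n\ge 4$. Here $\Lambda_k\#\Lambda_k$ is formally isotopic to the unknot and, as established in the proof of Theorem \ref{thm:main}, bounds the flexible Lagrangian $\widehat{L_k\#L_k}$, so the filling $W_k$ obtained by capping $\Lambda_k\#\Lambda_k$ is the exotic $T^*S^n$ associated to $L_k\#L_k$ in the sense of \cite[Theorem 4.7]{flexible}. To make this identification legitimate I would only need to check the (routine) numerical hypotheses of that theorem pass to the connected sum: $L_k\#L_k$ is stably parallelizable, $\chi(L_k\#L_k)=2$ for $n$ even, and $\chi_{\frac{1}{2}}(L_k\#L_k)\equiv 1\pmod 2$ for odd $n>3$, each following from the additivity formulas together with the corresponding property of $L_k$. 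Its contact boundary is $\partial W_k=Y_k$, and Theorem \ref{thm:main} already shows the $Y_k$ are pairwise non-contactomorphic; concretely $SH^{n-1}_+(W_k;\Q)\cong \Q^{4k}$ separates infinitely many of them. This immediately yields, for each fixed $n\ge 4$, infinitely many exotic $T^*S^n$ with the standard almost Weinstein structure and pairwise different contact boundaries.

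For $n=3$ the invariant $SH_+$ is no longer available as a \emph{contact} invariant, since by Proposition \ref{prop:ADC} these exotic $T^*S^3$ fail to be asymptotically dynamically convex as soon as $L\neq S^3$. Instead I would invoke Proposition \ref{prop:lens} directly: the exotic $T^*S^3$ obtained from $T^*L(k,1)$ form an infinite family whose contact boundaries $\partial W_k$ are pairwise distinct, the distinction being realized by the $S^1$-equivariant classes of the index-$0$ Reeb orbits $\gamma_i$, $i\in\Z/k\setminus\{0\}$, which persist in $SH^{3}_{+,S^1}$ of \emph{every} Weinstein filling. Combining the two ranges finishes the corollary.

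The only genuine obstacle lives in the $n=3$ case, and it has already been surmounted in Proposition \ref{prop:lens}: once asymptotic dynamical convexity fails, positive ($S^1$-equivariant) symplectic cohomology is a priori only an invariant of the Liouville filling, so one must argue by hand that the low-index geodesic orbits cannot be cancelled by the short Reeb chords created along the loose Legendrian in the critical handle. For $n\ge 4$ there is no new difficulty: simple connectivity of $L_k$ eliminates the index-$0$ orbits, so the asymptotic dynamical convexity exploited in the proof of Theorem \ref{thm:main} applies, and distinctness of boundaries reduces to the loop-space computation carried out there. The remaining bookkeeping—verifying that the connected sums $L_k\#L_k$ still satisfy the hypotheses of \cite[Theorem 4.7]{flexible}—is elementary.
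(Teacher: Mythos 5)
Your proposal is correct and matches the paper's (implicit) argument exactly: the corollary is stated without a separate proof precisely because it follows by combining the fillings $W_k$ with pairwise distinct boundaries $Y_k$ from the proof of Theorem \ref{thm:main} for $n\ge 4$ with the lens-space family of Proposition \ref{prop:lens} for $n=3$. Your additional bookkeeping (stable parallelizability and the $\chi$, $\chi_{\frac{1}{2}}$ conditions for $L_k\#L_k$) is harmless but not strictly needed, since the paper obtains the flexible filling $\widehat{L_k\#L_k}$ directly from the ambient-surgery cobordism rather than by re-invoking \cite[Theorem 4.7]{flexible}.
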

\begin{remark}
For $n\ge 9$, Zhao \cite{zhao} proved that for any $N$, there exist $N$ different $2n$-dimensional Weinstein domains that have the same almost Weinstein structure, and the contact boundaries are pairwisely different.
\end{remark}

Another difficulty is that $\pi_1(L\#L))$ has infinitely many conjuacy classes as long as $L\ne S^3$, hence we can not apply the argument in Proposition \ref{prop:lens} to $L\#L$\footnote{Moreover, there may not be any nice Reeb flow on $ST^*(L\#L)$. In view of Meyer's theorem \cite[Theorem 19.4]{MR0163331}, a nice geodesic flow requires, for example, positive Ricci curvature, which in dimension 3, only happens on quotients of $S^3$.}. Moreover, we do not know that the exotic $T^*S^3$ given by $W(L\#L)$ are different from each other, let alone the contact boundary. The former question, in principle, can be answered by understanding $H_*(\Lambda (L\#L))$ along with the rich structures on it. 

\begin{proof}[Proof of Proposition \ref{prop:three}]
Let $L\ne S^3$, it suffices to prove that $\partial W(L\# L)$ is different from $ST^*S^3$ and $\partial (\mathrm{Flex}(T^*S^3))$. This is clear from Proposition \ref{prop:ADC}, as the latter two are both asymptotically dynamically convex.
\end{proof}

\bibliographystyle{plain} 
\bibliography{ref}
\Addresses
\end{document}